\tikzstyle{spring}=[line width=0.8,black,snake=coil,segment amplitude=5,segment length=5,line cap=round]
\tikzstyle{mass}=[line width=0.6,black,rounded corners=0]
\tikzstyle{ground}=[fill,pattern=north east lines,draw=none,minimum width=0.3,minimum height=0.6]
\declaretheoremstyle[headfont=\normalfont\bfseries]{bfthmstyle}
\declaretheorem[sharenumber=Theorem,style=bfthmstyle]{Lemma}
\declaretheoremstyle[headfont=\normalfont\bfseries,qed={$\diamond$}]{bfdefstyle}
\declaretheorem[sharenumber=Theorem,style=bfdefstyle]{Definition}
\declaretheorem[sharenumber=Theorem,style=bfdefstyle]{Definition-Proposition}
\declaretheorem[sharenumber=Theorem,style=bfdefstyle]{Definition-Lemma}
\declaretheorem[sharenumber=Theorem,style=bfdefstyle]{Example}
\newtheorem{definition-proposition}[Theorem]{Definition-Proposition} %avoid in the future
\newtheorem{definition-lemma}[Theorem]{Definition-Lemma} %avoid in the future
\declaretheoremstyle[headfont=\normalfont\bfseries,qed={$\diamond$}]{bfremstyle}
\declaretheorem[sharenumber=Theorem,style=bfdefstyle]{Remark}
\newcommand{\abs}[1]{\left\vert #1 \right\vert}
\newcommand{\set}[1]{\left\lbrace #1 \right\rbrace}
\declaretheoremstyle[
notefont=\normalfont, notebraces={}{},
headformat=\mathbb{N}UMBER~\mathbb{N}AME~\mathbb{N}OTE
]{nopar}
\numberwithin{equation}{section}
\author{Jonas Kirchhoff}
\title{A behavioural approach to port-controlled systems}
\date{03.06.2024}
\begin{document}
\setlength{\parindent}{0em}
\pagestyle{fancy}
\lhead{Jonas Kirchhoff}
\rhead{behavioural approach}

\maketitle

\paragraph{Abstract} We give insight in the structure of port-Hamiltonian systems as control systems in between two closed Hamiltonian systems. Using the language of category theory, we identify systems with their behavioural representation and view a port-control structure with desired structural properties on a given closed system as an extension of this system which itself may be embedded in a ``larger'' closed system. The latter system describes the nature of the ports (e.g. Hamiltonian, metriplectic etc.). This point of view allows us to describe meaningful port-control structures for a large family of systems, which is illustrated with Hamiltonian and metriplectic systems.\vspace{-1mm}
 
\paragraph{Keywords} port-Hamiltonian systems, metriplectic system, behavioural theory, categorical systems theory.\vspace{-1mm}

\vfill
\par\noindent\rule{5cm}{0.4pt}\\
\begin{footnotesize}
Corresponding author: Jonas Kirchhoff\\[1em]
Jonas Kirchhoff\\
Institut für Mathematik, Technische Universität Ilmenau, Weimarer Stra\ss e 25, 98693 Ilmenau, Germany\\
E-mail: jonas.kirchhoff@tu-ilmenau.de\\[1em]

Jonas Kirchhoff thanks the Technische Universität Ilmenau and the Freistaat Thüringen for their financial support as part of the Thüringer Graduiertenförderung.

\end{footnotesize}

\newpage

\section{Introduction}

Port-controlled systems, in particular port-Hamiltonian systems, have been extensively studied since their introduction more than thirty years ago. This vast theory covers finite-dimensional electro-mechanical systems~\cite{SchaftJeltsema14,Mehrmann_Unger_2023}, infinite-dimensional systems~\cite{JacoZwar12} and has been extended to thermodynamic systems, see e.g.~\cite{RamiMascSbar13,Ramirez_Entropy22,MascScha19}, However, the assignement of a port-structure does appear to be rather ad-hoc instead of following a well-defined procedure. In the present paper, a behavioural point of view is offered, where port-controlled systems are the behaviours that realise a certain class of choices of extensions of given closed models. 

The paper is organised as follows. In section 2, we recall the behavioural theory of Jan Willems, see~\cite{Polderman}. We demonstrate that instead of following the original definition, the continuous machines of~\cite{SchuSpivVasi20} present more properties of solutions of dynamical systems, in particular differential equations. In section 3, we define port-controlled systems with given interface and demonstrate this with Hamiltonian and metriplectic systems in section 4.

\section{Behaviours and continuous machines}

The original definition of a dynamical system in the behavioural framework is the following.

\begin{Definition}[{\cite[Def. 1.3.1]{Polderman}}]\label{def:Behaviour}
A dynamical system is a triple $\Sigma = (\mathbb{T},\mathbb{W},\mathfrak{B})$ of a \textit{time axis} $\mathbb{T}\subseteq\mathbb{R}$, a \textit{signal space} $\mathbb{W}$ and a \textit{behaviour} $\mathfrak{B}\subseteq\mathbb{W}^\mathbb{T}$ of curves in the signal space.
\end{Definition}

In particular, trajectories of a dynamical system must necessarily be defined on the whole time axis. A particular class of dynamical systems are the dynamical systems which are given by differential equations, where the time axis is a nontrivial interval and a typical choice of behaviour is the set of all maximal solutions. Here, a solution is a differentiable curve, defined on an open interval, which fulfils the differential equation on each point; and a maximal solution is a solution which may not be extended to a larger interval on the time axis. As an example, the set $\mathfrak{B}_0$ of all maximal solutions of
\begin{align}\label{eq:IVP}
\tfrac{\mathrm{d}}{\mathrm{d}t}x = x^2,\qquad x(0) = x_0,
\end{align}
where $x_0$ is allowed to vary arbitrarily in $\mathbb{R}$, contains precisely the functions
\begin{align*}
x(\cdot;x_0):\left(-\frac{1}{\abs{x_0}},\frac{1}{\abs{x_0}}\right)\to\mathbb{R},\quad t\mapsto\frac{1}{\frac{1}{x_0}-t}
\end{align*}
and the constant zero function. Each (not necessarily maximal) solution of~\eqref{eq:IVP} is the restriction of a maximal solution to a smaller interval. In particular, these solutions are not defined on the whole time axis $\mathbb{R}$, and there exists no $\set{0}\subsetneqq\mathbb{T}\subseteq\mathbb{R}$ so that all (not necessarily maximal) solutions of~\eqref{eq:IVP} may be defined on $\mathbb{T}$. Thus, $\mathfrak{B}_0$ is not a behaviour in the strict sense of Definition~\ref{def:Behaviour}. This may be resolved by replacing the time axis by a ``suitable collection'' $(\mathbb{T}_i)_{i\in I}$ of time axes; the behaviour then becomes a family $\mathfrak{B}_i\in\mathbb{W}^{\mathbb{T}_i}$, $i\in I$.

A second property of solutions of differential equations that is not captured by the general Definition~\ref{def:Behaviour}, is that restrictions of (not necessarily maximal) solutions of a differential equation are (in general not maximal) solutions. This means that there exist ``restriction functions'' $\varphi_{i,j}: \mathfrak{B}_i\to\mathfrak{B}_j$ for ``suitable'' time axes $\mathbb{T}_i,\mathbb{T}_j$; in particular, there must be a well-chosen relation between $\mathbb{T}_i,\mathbb{T}_j$. A first choice for time axes in the case of a differential equation is the set of all open, nonempty intervals, with $\mathfrak{B}_I$ being the set of all curves defined on $I$ which solve our differential equation. A quite natural choice for the restriction functions in this case is the the usual restriction of functions to smaller intervals, i.e. $\varphi_{I,J}$ exists if, and only if, $J\subseteq I$, and then $\varphi_{I,J}(x) = x\vert_J$. Then, it becomes clear that a ``suitable collection'' of time axes must be a ``well-chosen'' category, and the behaviour is a presheaf over this category. Recall the formal definition.

\begin{Definition}
A \textit{small category} $\mathcal{C}$ consists of the data
\begin{itemize}
\item a set $\mathrm{Ob}_\mathcal{C}$ of \textit{objects}
\item for each $x,y\in\mathrm{Ob}_\mathcal{C}$ a (possibly empty) set $\mathrm{Hom}_\mathcal{C}(x,y)$ of \textit{morphisms}
\item an operation $\circ: \mathrm{Hom}_\mathcal{C}(y,z)\times\mathrm{Hom}_\mathcal{C}(x,y)\to\mathrm{Hom}_\mathcal{C}(x,z)$
\end{itemize}
so that $\circ$ is associative and admits a unit, i.e. for all $x\in\mathrm{Ob}_\mathcal{C}$ there exists $1_x\in\mathrm{Hom}_\mathcal{C}(x,x)$ so that, for all $y\in\mathrm{Ob}_\mathcal{C}$, $\varphi\in\mathrm{Hom}_\mathcal{C}(x,y)$ and $\psi\in\mathrm{Hom}(y,x)$, $\varphi\circ 1_x = \varphi$ and $1_x\circ\psi = \psi$. The \textit{opposite category} $\mathcal{C}^{\mathrm{op}}$ is the category with $\mathrm{Ob}_{\mathcal{C}^\mathrm{op}} := \mathrm{Ob}_\mathcal{C}$ and $\mathrm{Hom}_{\mathcal{C}^{\mathrm{op}}}(x,y) := \mathrm{Hom}_\mathcal{C}(y,x)$ for all $x,y\in\mathrm{Ob}_\mathcal{C}$. A \textit{functor} $F:\mathcal{C}\to\mathcal{C}'$ between (small) categories $\mathcal{C}$ and $\mathcal{C}'$ consists of the data
\begin{itemize}
\item for all $x\in\mathrm{Ob}_\mathcal{C}$, an object $F(x)\in\mathrm{Ob}_{\mathcal{C}'}$
\item for each $\varphi\in\mathrm{Hom}_\mathcal{C}(x,y)$, a morphism $F(\varphi)\in\mathrm{Hom}_{\mathcal{C}'}(F(x),F(y))$
\end{itemize}
so that $F(\psi\circ\varphi) = F(\psi)\circ F(\varphi)$ and $F(1_x) = 1_{F(x)}$. The functors $F:\mathcal{C}^\mathrm{op}\to\mathcal{C}'$ are called presheaves over $\mathcal{C}$ with values in $\mathcal{C}'$.
\end{Definition}

\begin{Remark}
In particular, the presheaves over a small category $\mathcal{C}$ with values in a category $\mathcal{C}'$ are the objects of the category \textbf{Psh}($\mathcal{C}$,$\mathcal{C}'$), see~\cite[Definition 17.1.3]{KashShap06}. A morphism of presheaves $\mathcal{F}$ and $\mathcal{G}$ consists is a family $\varphi_x\in\mathrm{Hom}_{\mathcal{C}'}(\mathcal{F}(x),\mathcal{G}(x))$, $x\in\mathrm{Ob}_\mathcal{C}$, so that the diagram
\begin{center}
\begin{tikzcd}
\mathcal{F}(x)\arrow{r}{\varphi_x}\arrow{d}{\mathcal{F}(\psi)} & \mathcal{G}(x) \arrow{d}{\mathcal{G}(\psi)}\\
\mathcal{F}(y)\arrow{r}{\varphi_y} & \mathcal{G}(y)
\end{tikzcd}
\end{center}
is commutative for all $x,y\in\mathrm{Ob}_\mathcal{C}$ and each $\psi\in\mathrm{Hom}_\mathcal{C}(y,x)$.
\end{Remark}

\begin{Example}\label{ex:unzureichend}
The open intervals containing zero are the objects of a category $\mathcal{C}$ whose morphisms are the inclusions, i.e. $\mathrm{Hom}_\mathcal{C}(I,J) = \set{\mathrm{id}_I}$ whenever $I\subseteq J$, and otherwise $\mathrm{Hom}_\mathcal{C}(I,J) = \emptyset$. Let $f:\mathbb{R}^{1+n}\to\mathbb{R}^n$ be a continuous function and put, for all $I\in\mathrm{Ob}_\mathcal{C}$,
\begin{align*}
\mathfrak{B}_{f,0}(I) := \set{x\in\mathcal{C}^1(I,\mathbb{R}^n)\,\big\vert\,\forall t\in I: \tfrac{\mathrm{d}}{\mathrm{d}t}x(t) = f(t,x(t))},
\end{align*}
and, for all $I,J\in\mathrm{Ob}_\mathcal{C}$ with $J\subseteq I$ and $x\in\mathfrak{B}_{f,0}(I)$, $\varphi_{I,J}(x) := x\vert_J$. Since differentiability and continuity are by definition local properties, $x\vert_J\in\mathfrak{B}_{f,0}(J)$ and therefore the data $(\mathfrak{B}_{f,0}(I),\varphi_{I,J})_{J\subseteq I\in\mathrm{Ob}_\mathcal{C}}$ are the data of a presheaf with values in the category of sets. This presheaf is the behaviour of the initial value problem
\begin{align*}
\tfrac{\mathrm{d}}{\mathrm{d}t}x = f(t,x),\qquad x(0) = x_0
\end{align*}
with free initial state $x_0$.
\end{Example}

One property of solutions of a differential equation $\tfrac{\mathrm{d}}{\mathrm{d}t}x = f(t,x)$ is that solutions that agree on overlap of their domains may be glued together to a single solution. This is formalised in the notion of a sheaf on a Grothendieck site. Informally\footnote{The formal definition using sieves may be found e.g. in~\cite[chapter 16]{KashShap06}.}, the latter is a category with a choice of covering families of maps corresponding to open coverings in the category of open sets on a topological space; and a sheaf is a presheaf so that families of elements locally defined on a covering family, whose restrictions coincide on common, may be globally extended. Then, a natural choice of coverings on the interval category of Example~\ref{ex:unzureichend} are coverings by increasing families of open subintervals and gluing of solutions is just the choice of the solution defined on the largest element of the covering family. A different category of intervals whose sheaves much better capture the more intricate property that solutions defined on compact intervals $[a,b]$ and $[b,c]$ may be glued together whenever they coincide on $b$, was suggested in~\cite{SchuSpivVasi20}.

\begin{Definition}[{\cite[Definition 3.1.1]{SchuSpivVasi20}}]
The category \textbf{Int} of \textit{continuous intervals} consists of the following data:
\begin{itemize}
\item $\mathrm{Ob_{\textbf{Int}}} := \mathbb{R}_{\geq 0}$
\item $\mathrm{Hom}_{\mathrm{\textbf{Int}}}(a,b) := [0,b-a]$ for all $a,b\in\mathbb{R}_{\geq 0}$, where $[0,-x] := \emptyset$ for all $x\in\mathbb{R}_{\geq 0}$,
\item $x\circ y := x+y$ for all $x\in\mathrm{Hom}_{\mathrm{\textbf{Int}}}(b,c)$ and $y\in\mathrm{Hom}_{\mathrm{\textbf{Int}}}(a,b)$, $0\leq a\leq b\leq c$.
\end{itemize}
A \textit{continuous interval sheaf} $(\mathcal{F}(a),\varphi_{a,b,x})_{0\leq b\leq a,0\leq x\leq a-b}$ is a presheaf over \textbf{Int} with values in \textbf{Set} so that, for all $0\leq\tau\leq t$ and $x,y\in\mathcal{F}(t)$,
\begin{align*}
\varphi_{t,\tau,0}(x) = \varphi_{t,\tau,0}(x)~\wedge~\varphi_{t,t-\tau,\tau}(x) = \varphi_{t,t-\tau,\tau}(y)
\end{align*}
implies that $x = y$ and, for all $x_1\in\mathcal{F}(\tau)$ and $x_2\in\mathcal{F}(t-\tau)$ with
\begin{align*}
\varphi_{t-\tau,0,0}(x_2) = \varphi_{\tau,0,\tau}(x_1)
\end{align*}
there exists $x_0\in\mathcal{F}_t$ so that
\begin{align*}
x_1 = \varphi_{t,\tau,0}(x_0)~\wedge~\varphi_{t,t-\tau,\tau}(x_0).
\end{align*}
\end{Definition}

We illustrate this definition with the realisation of differential equations as continuous interval sheaves. This realisation is ever so slightly different than the one given in~\cite[Proposition 5.1.2]{SchuSpivVasi20} where the realisation of a time-varying differential equation as time-invariant differential equation was used.

\begin{Example}\label{ex:ODE}
Let $f:\mathbb{R}^{1+n}\to\mathbb{R}^n$ be a continuous function and define, for each $\tau\in\mathbb{R}_{\geq 0}$, $\mathfrak{B}_f(\tau)$ as the set
\begin{align*}
\set{(x,\vartheta)\in\mathcal{C}^1([0,\tau],\mathbb{R}^n)\times\mathbb{R}\,\left\vert\,\begin{array}{l}\tfrac{\mathrm{d}}{\mathrm{d}t}x(t) = f(t-\vartheta,x(t))\\\text{for all}~t\in[0,\tau]\end{array}\right.};
\end{align*}
$\mathcal{C}^1([0,\tau],\mathbb{R}^n) := \set{x\vert_{[0,\tau]}\,\big\vert\,x\in\mathcal{C}^1((-\varepsilon,\tau+\varepsilon),\mathbb{R}^n), \varepsilon>0}$. Define further, for $0\leq t_0\leq t_1$ and $0\leq \tau\leq t_1-t_0$,
\begin{align*}
\varphi_{t_1,t_0,\tau}: \mathfrak{B}_f(t_1)\to\mathfrak{B}_f(t_0),\ (x,\vartheta)\mapsto (t\mapsto x(t+\tau),\vartheta-\tau);
\end{align*}
using the definition of $\mathfrak{B}_f(t_0)$, it is evident that $\varphi_{t_1,t_0,\tau}$ is well-defined. Then, the first property of a sheaf is evident, and, for all $0\leq\tau\leq t$, $(x_1,\vartheta_1)\in\mathfrak{B}_f(\tau)$ and $(x_2,\vartheta_2)\in\mathfrak{B}_f(t-\tau)$ fulfil $\varphi_{t-\tau,0,0}(x_2,\vartheta_2) = \varphi_{\tau,0,\tau}(x_1,\vartheta_1)$ if, and only if, $\vartheta_2 = \vartheta_1-\tau$ and $x_2(0) = x_1(\tau)$. Define
\begin{align*}
z: [0,t]\to\mathbb{R}^n,\qquad \text{\foreignlanguage{russian}{т}}\mapsto \begin{cases}
x_1(\text{\foreignlanguage{russian}{т}}), & \text{\foreignlanguage{russian}{т}}\in[0,\tau],\\
x_2(\text{\foreignlanguage{russian}{т}}-\tau), & \text{\foreignlanguage{russian}{т}}\in[\tau,t].
\end{cases}
\end{align*}
Since $x_1(\tau) = x_2(0)$, $z$ is continuous, and since $f$ is continuous, $z$ is continuously differentiable with
\begin{align*}
\tfrac{\mathrm{d}}{\mathrm{d}t} z(\text{\foreignlanguage{russian}{т}}) = \tfrac{\mathrm{d}}{\mathrm{d}t}x(\text{\foreignlanguage{russian}{т}}) = f(\text{\foreignlanguage{russian}{т}}-\vartheta_1,x(\text{\foreignlanguage{russian}{т}}) = f(\text{\foreignlanguage{russian}{т}}-\vartheta_1,z(\text{\foreignlanguage{russian}{т}}))
%
%\left.\begin{cases}
%\tfrac{\mathrm{d}}{\mathrm{d}t}x(\text{\foreignlanguage{russian}{т}}) = f(\text{\foreignlanguage{russian}{т}}-\vartheta_1,x(\text{\foreignlanguage{russian}{т}}), & \text{\foreignlanguage{russian}{т}}\in [0,\tau],\\
%\tfrac{\mathrm{d}}{\mathrm{d}t}x_2(\text{\foreignlanguage{russian}{т}}-\tau) = f(\text{\foreignlanguage{russian}{т}}-\tau-\vartheta_2,x_2(\text{\foreignlanguage{russian}{т}}-\tau)), & \text{\foreignlanguage{russian}{т}}\in[\tau,t],
%\end{cases}\right\} = f(\text{\foreignlanguage{russian}{т}}-\vartheta_1,z(\text{\foreignlanguage{russian}{т}})
\end{align*}
for all $\text{\foreignlanguage{russian}{т}}\in [0,\tau]$, and
\begin{align*}
\tfrac{\mathrm{d}}{\mathrm{d}t} z(\text{\foreignlanguage{russian}{т}}) = \tfrac{\mathrm{d}}{\mathrm{d}t}x_2(\text{\foreignlanguage{russian}{т}}-\tau) & = f(\text{\foreignlanguage{russian}{т}}-\tau-\vartheta_2,x_2(\text{\foreignlanguage{russian}{т}}-\tau))\\
& = f(\text{\foreignlanguage{russian}{т}}-\vartheta_1,z(\text{\foreignlanguage{russian}{т}}))
\end{align*}

for all $\text{\foreignlanguage{russian}{т}}\in[\tau,t]$. This shows that $(z,\vartheta_1)\in\mathfrak{B}_f(t)$ with $\varphi_{t,\tau,0}((z,\vartheta_1)) = (x_1,\vartheta_1)$ and $\varphi_{t,t-\tau,\tau}((z,\vartheta_1)) = (x_2,\vartheta_2)$. Therefore, $\mathfrak{B}_f$ is a continuous interval sheaf. This sheaf may be viewed as the sheaf of all solutions of the differential equation
\begin{align}\label{eq:diff_eq}
\tfrac{\mathrm{d}}{\mathrm{d}t}x = f(t,x),
\end{align}
where a solution $x:[t_0,t_1]\to\mathbb{R}^n$, $t_1\geq t_0$, is a continuously differentiable function which fulfils~\eqref{eq:diff_eq} for all $t\in [t_0,t_1]$. Such a solution corresponds then to $(x(\cdot+t_0),t_0)\in\mathfrak{B}_f(t_1-t_0)$, and conversely each $(x,\vartheta)\in\mathfrak{B}_f(\tau)$ corresponds to the solution $x(\cdot-\vartheta)\in\mathcal{C}^1([\vartheta,\vartheta+\tau])$ of~\eqref{eq:diff_eq}.
\end{Example}

We may now give a behavioural definition of a continuous dynamical system, where ``continuous'' just means continuous time.

\begin{Definition}
A continuous dynamical system is a continuous interval sheaf $\mathfrak{B}$.
\end{Definition}

\begin{Remark}
In Willems' original framework, properties of a dynamical system, e.g. linearity, can be translated in properties of the corresponding behavioural model: A system is linear if, and only if, the signal space is linear. The same description is possible in terms of continuous interval sheaves. A linear continuous dynamical system is a continuous interval sheaf $\mathfrak{B}$ with values in the category \textbf{Vect} of vector spaces.
\end{Remark}

To deal with systems with input and output, Schultz et al. introduce a continuous machine as follows.

\begin{Definition}
A continuous machine is a tuple $(\mathfrak{B},\mathfrak{E},\mathfrak{A},\varphi)$ of continuous interval sheaves $\mathfrak{B},\mathfrak{E},\mathfrak{A}$ and a sheaf morphism $\varphi:\mathfrak{B}\to\mathfrak{A}\times\mathfrak{E}$, where $(\mathfrak{A}\times\mathfrak{E})(t) = \mathfrak{A}(t)\times\mathfrak{E}(t)$ for all $t\in\mathbb{R}_{\geq 0}$
\end{Definition}

\begin{Example}\label{ex:ISO}
Let $f\in\mathcal{C}(\mathbb{R}^{1+n+m},\mathbb{R}^n)$, $g\in\mathcal{C}(\mathbb{R}^{1+n+m},\mathbb{R}^p)$ and consider the nonlinear input-state-output system
\begin{equation}\label{eq:NLC}
\begin{aligned}
\tfrac{\mathrm{d}}{\mathrm{d}t}x(t) & = f(t,x,u(t)),\\
y(t) & = g(t,x(t),u(t)),
\end{aligned}
\end{equation}
where the control $u$ is a continuous function with values in $\mathbb{R}^m$, and define $\mathfrak{B}_{f,g}(\tau)\subseteq\mathcal{C}^1([0,\tau],\mathbb{R}^n)\times\mathcal{C}([0,\tau],\mathbb{R}^m)\times\mathbb{R}$, for all $\tau\in\mathbb{R}_{\geq 0}$, as the set
\begin{align*}
\set{(x,u,\vartheta)\,\big\vert\,\forall t\in[0,\tau]: \tfrac{\mathrm{d}}{\mathrm{d}t}x(t) = f(t-\tau,x(t),u(t))},
\end{align*}
and put, similiarly to Example~\ref{ex:ODE} for all $0\leq t_0\leq t_1$ and $0\leq \tau\leq t_1-t_0$,
\begin{align*}
\varphi_{t_1,t_0,\tau}^{\mathfrak{B}}: \mathfrak{B}_{f,g}(t_1)& \to\mathfrak{B}_{f,g}(t_0),\\
(x,u,\vartheta) & \mapsto (t\mapsto x(t+\tau),t\mapsto u(t+\tau),\vartheta-\tau).
\end{align*}
Analogously, we define $\mathfrak{A}(\tau) := \mathcal{C}([0,\tau],\mathbb{R}^m)\times\mathbb{R}$ and $\mathfrak{E} := \mathcal{C}([0,\tau],\mathbb{R}^p)\times\mathbb{R}$ with restriction morphisms
\begin{align*}
\varphi_{t_1,t_0,\tau}^{\mathfrak{A}}(u,\vartheta) & := (t\mapsto u(t+\tau),\vartheta-\tau),\\
\varphi_{t_1,t_0,\tau}^{\mathfrak{E}}(y,\vartheta) & := (t\mapsto y(t+\tau),\vartheta-\tau).
\end{align*}
Analogously to Example~\ref{ex:ODE}, it is straightforward to verify that $\mathfrak{E},\mathfrak{A},\mathfrak{B}$ are continuous interval sheaves. Define the function
\begin{align*}
\varphi:\mathfrak{B}\to\mathfrak{A}\times\mathfrak{E},\qquad (x,u,\vartheta)\mapsto \big((g(\cdot-\vartheta,x,u),\vartheta),(u,\vartheta)\big).
\end{align*}
It is evident that $\varphi$ is a morphism of continuous interval sheaves, so that $(\mathfrak{B},\mathfrak{E},\mathfrak{A},\varphi)$ is a continuous interval machine. This machine is equivalent to the control system~\eqref{eq:NLC} in the sense that there is the one-to-one correspondence between solutions of~\eqref{eq:NLC}, i.e. curves $(x,u,y)\in\mathcal{C}^1([t_0,t_1],\mathbb{R}^n)\times\mathcal{C}([t_0,t_1],\mathbb{R}^m)\times\mathcal{C}([t_0,t_1],\mathbb{R}^p)$ which fulfil~\eqref{eq:NLC} pointwise, and elements $(x(\cdot+t_0),u(\cdot+t_0),t_0)\in\mathfrak{B}_{f,g}(t_1-t_0)$. 
\end{Example}

The morphism $\varphi_\mathfrak{E}$ may be viewed as the ``input map'' and $\varphi_\mathfrak{A}$ as the ``output map'' of a given continuous machine. Note, however, that there is no explicit notion of a ``state'' of a continuous interval sheaf which differs from ``input'' and ``output'', nor is it required that the input and output differ at all. The authors of~\cite{SchuSpivVasi20} argue additionally that the notions of ``input'' and output are completely arbitrary and symmetric so that there is no fixed notion of input and output of a continuous machine. More precisely, one might think of the morphism $\varphi$ as an arbitrary ``reading'' on the system, whose results may, or may not, be used to manipulate the system by choosing convenient subsystems. Therefore, it might be argued that the theory of continuous machine is in the spirit of the behavioural theory of Willems.

\section{Port-based modelling}

\subsection{Motivation}

The idea is very simple indeed and may be illustrated with the following example. Consider a simple planar mass-spring system consisting of a single ideal spring with spring constant $k\in\mathbb{R}_{>0}$, which may be contracted and stretched along a single axis, and a single point mass $m>0$ which is attached to the spring. This system is assumed to be in a vacuum with no external forces. Then, the standard Hamiltonian model using Hooke's Law is
\begin{align*}
\tfrac{\mathrm{d}}{\mathrm{d}t}\begin{pmatrix}
x\\p
\end{pmatrix} & = \begin{bmatrix}
0 & 1\\
-1 & 0
\end{bmatrix}\begin{bmatrix}
k & 0\\
0 & \frac{1}{m}
\end{bmatrix}\begin{pmatrix}
x\\p
\end{pmatrix}.
\end{align*}
The most simple way of interaction with this system is to impose a force on the point mass. This may be viewed as adding a second spring ``on the opposite side'' of the point mass with time-varying spring constant $\kappa$, which is allowed to take arbitrary real values; since this is a thought-experiment, we do not care about the physical existence of such a spring. This configuration is sketched in Figure~\ref{fig:1}.
\begin{figure}
\centering
\begin{tikzpicture}
\def\m{3}
\def\d{0.3}
\def\h{0.5}
\def\w{1}
\def\W{0.2}
\def\H{0.7}
\draw[spring] (\m,\h/2)--++ (-\m,0) node[midway,below = 4]{$k$};
\draw[mass] (\m,0) rectangle++ (\w,\h) node[midway] {$m$};
\draw[spring,segment length = 7] (\m+\w,\h/2)--++ (\m+\d,0)node[midway,below = 4]{$\kappa(t)$};
\draw (0,-\H+\h/2) --++ (0,2*\H+\h);
\draw (2*\m+\w+\d,-\H+\h/2) --++ (0,2*\H+\h);
\draw[ground] (0,-\H+\h/2) rectangle++ (-\W,2*\H+\h);
\draw[ground] (2*\m+\w+\d,-\H+\h/2) rectangle++ (\W,2*\H+\h);
\draw[<->] (\m+\w,1.5*\h) --++ (\m+\d,0) node[midway,above = 2]{$\upsilon(t)$};
\draw[<->] (0,1.5*\h) --++ (\m,0) node[midway,above = 2]{$x(t)$};
\end{tikzpicture}
\caption{extended mass-spring-system}
\label{fig:1}
\end{figure}
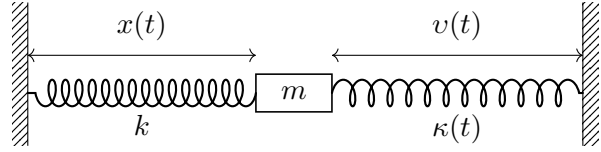
Then, we get the family of closed models for the extended mass-spring system
\begin{align*}
\tfrac{\mathrm{d}}{\mathrm{d}t}\begin{pmatrix}
x\\p\\\upsilon
\end{pmatrix} & = \begin{bmatrix}
0 & 1 & 0\\
-1 & 0 & 1\\
0 & -1 & 0
\end{bmatrix}\begin{bmatrix}
k & 0 & 0\\
0 & \frac{1}{m} & 0\\
0 & 0 & \kappa(t)
\end{bmatrix}\begin{pmatrix}
x\\p\\\upsilon
\end{pmatrix}
\end{align*}
where $\upsilon$ denotes the ``elongation'' of our second spring. When we then add the ``readings'' $u := \kappa v$ and $y := -\tfrac{\mathrm{d}}{\mathrm{d}t}\upsilon$, then we see that the port-Hamiltonian system
\begin{align*}
\tfrac{\mathrm{d}}{\mathrm{d}t}\begin{pmatrix}
x\\p
\end{pmatrix} & = \begin{bmatrix}
0 & 1\\
-1 & 0
\end{bmatrix}\begin{bmatrix}
k & 0\\
0 & \frac{1}{m}
\end{bmatrix}\begin{pmatrix}
x\\p
\end{pmatrix} + \begin{bmatrix}
0\\1
\end{bmatrix}u\\
y & = \begin{bmatrix}
0\\1
\end{bmatrix}^\top\begin{bmatrix}
k & 0\\
0 & \frac{1}{m}
\end{bmatrix}\begin{pmatrix}
x\\p
\end{pmatrix}
\end{align*}
emerges from our model systems. We may alternatively identify trajectories whose curves $x,p$ coincide of model systems with parameters $\kappa,\upsilon$ and $\kappa',\upsilon'$ whenever $\kappa \upsilon = \kappa'\upsilon'$. This identification gives a subsystem, from which also the port-Hamiltonian system after definition of $u,y$ emerges.

\subsection{Formalisation}

First, we need to establish a notion of a subsystem of a given machine. It is intuitive to say that a system contains a second system as a subsystem if the behaviour of the latter can be embedded into the behaviour of the former. In the language of Category theory, embeddings are particular morphisms, monomorphism. Morphisms of machines are straightforwardly defined using morphisms of sheaves as follows.

\begin{Definition}
Let $\mathcal{M} = (\mathfrak{B},\mathfrak{E},\mathfrak{A},\varphi)$, $\mathcal{M}' = (\mathfrak{B}',\mathfrak{E}',\mathfrak{A}',\varphi')$ be continuous machines. A triple $\Phi = (\beta,\eta,\alpha)$ is a morphism of continuous machines if either of the diagrammata
\begin{center}
\begin{tikzcd}
\mathfrak{B}\arrow{rrr}{\beta}\arrow{dr}{\varphi_{\mathfrak{E}}}\arrow{dd}{\varphi_{\mathfrak{A}}} & & & \mathfrak{B}'\arrow{dl}{\varphi_{\mathfrak{E}'}}\arrow{dd}{\varphi_{\mathfrak{A}'}}\\
& \mathfrak{E}\arrow{r}{\eta} & \mathfrak{E}'\\
\mathfrak{A}\arrow{rrr}{\alpha} & & & \mathfrak{A}'
\end{tikzcd}\qquad
\begin{tikzcd}
\mathfrak{B}\arrow{rrr}{\beta}\arrow{dr}{\varphi_{\mathfrak{E}}}\arrow{dd}{\varphi_{\mathfrak{A}}} & & & \mathfrak{B}'\arrow{dl}{\varphi_{\mathfrak{E}'}}\arrow{dd}{\varphi_{\mathfrak{A}'}}\\
& \mathfrak{E}\arrow{r}{\eta} & \mathfrak{A}'\\
\mathfrak{A}\arrow{rrr}{\alpha} & & & \mathfrak{E}'
\end{tikzcd}
\end{center}
whose arrows are morphisms of \textbf{Int}-sheaves, is commutative; we write $\Phi:\mathcal{M}\to\mathcal{M}'$.
\end{Definition}

\begin{Remark}
The two possibilities of machine morphisms as families of morphisms $(\mathfrak{B}\to\mathfrak{B}',\mathfrak{E}\to\mathfrak{E}',\mathfrak{A}\to\mathfrak{A}')$ and $(\mathfrak{B}\to\mathfrak{B}',\mathfrak{E}\to\mathfrak{A}',\mathfrak{A}\to\mathfrak{E}')$ reflect the ambiguity of the choice of the sheafes $\mathfrak{A}$ and $\mathfrak{E}$.  For example, an input-state-output system~\eqref{eq:NLC} has the realisation as continuous machine of Example~\ref{ex:ISO}, where the projection onto $\mathfrak{A}$ selects the input and the projection onto $\mathfrak{E}$ selects the output of a trajectory; $\varphi_\mathfrak{A}$ may be interpreted as the input map and $\varphi_\mathfrak{E}$ as the output map. The exchange of input and output map may be interpreted as exchanging the problem of determining an output for an exlpicit differential equation, given by an input function, to the inverse control problem of determining an input which produces a given output. The formal equivalence of these problems is reflected in the the isomorphy of the machines $(\mathfrak{B},\mathfrak{E},\mathfrak{A},\varphi_\mathfrak{E}\oplus\varphi_\mathfrak{A})$ and $(\mathfrak{B},\mathfrak{A},\mathfrak{E},\varphi_\mathfrak{A}\oplus\varphi_\mathfrak{E})$.
\end{Remark}

Now, the notion of a subsystem reveals itself.

\begin{Definition}
Let $\mathcal{M} = (\mathfrak{B},\mathfrak{E},\mathfrak{A},\varphi)$, $\mathcal{M}' = (\mathfrak{B}',\mathfrak{E}',\mathfrak{A}',\varphi')$ be continuous machines. $\mathcal{M}$ is a \textit{subsystem} of $\mathcal{M}'$ if, and only if, there exists a monomorphism $\Phi: \mathcal{M}\hookrightarrow\mathcal{M}'$.
\end{Definition}

Lastly, we need a notion of a closed system. Intuitively, a system is closed, when there is no interaction between the system with any surroundings; in particular, there are no ``readings'', or ``measurements'' on the system. Recalling the interpretation of the morphism $\mathfrak{B}\to\mathfrak{A}\times\mathfrak{E}$ of a continuous machine as ``measurements'', the following definition presents itself.

\begin{Lemma}
Each constant presheaf over $\mathrm{\textbf{Int}}$ with values in \textbf{Set} is a continuous interval sheaf. In particular, all constant continuous interval sheaves are isomorphic.
\end{Lemma}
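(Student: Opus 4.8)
The plan is to fix an arbitrary set $S$ and consider the constant presheaf $\Delta_S$ over $\mathbf{Int}$, i.e. $\Delta_S(a):=S$ for every $a\in\mathbb{R}_{\geq 0}$ and $\varphi_{t_1,t_0,\tau}:=\mathrm{id}_S$ for every admissible triple $0\leq t_0\leq t_1$, $0\leq\tau\leq t_1-t_0$ (recall from Example~\ref{ex:ODE} the index convention that such a $\varphi_{t_1,t_0,\tau}$ is a map $\Delta_S(t_1)\to\Delta_S(t_0)$). First I would check that $\Delta_S$ is a presheaf: $\varphi_{t,t,0}=\mathrm{id}_S$ supplies the identity, and for composable $\mathbf{Int}$-morphisms $\tau_1\in\mathrm{Hom}_{\mathbf{Int}}(t_0,t_1)$, $\tau_2\in\mathrm{Hom}_{\mathbf{Int}}(t_1,t_2)$ one has $\varphi_{t_2,t_0,\tau_1+\tau_2}=\mathrm{id}_S=\varphi_{t_1,t_0,\tau_1}\circ\varphi_{t_2,t_1,\tau_2}$, so $\Delta_S$ is a functor $\mathbf{Int}^{\mathrm{op}}\to\mathbf{Set}$.

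Next I would verify the two sheaf axioms, both of which collapse because every structure map is the identity. For separatedness, given $0\leq\tau\leq t$ and $x,y\in\Delta_S(t)=S$ satisfying the hypothesis, the conjunct $\varphi_{t,t-\tau,\tau}(x)=\varphi_{t,t-\tau,\tau}(y)$ already reads $x=y$ since $\varphi_{t,t-\tau,\tau}=\mathrm{id}_S$ (and this is unaffected by the evident typo in the other conjunct of the axiom). For gluing, let $x_1\in\Delta_S(\tau)=S$ and $x_2\in\Delta_S(t-\tau)=S$; since $\varphi_{t-\tau,0,0}$ and $\varphi_{\tau,0,\tau}$ are both $\mathrm{id}_S$, the compatibility condition $\varphi_{t-\tau,0,0}(x_2)=\varphi_{\tau,0,\tau}(x_1)$ is exactly $x_2=x_1$ in $S=\Delta_S(0)$; setting $x_0:=x_1=x_2\in S=\Delta_S(t)$ gives $\varphi_{t,\tau,0}(x_0)=x_0=x_1$ and $\varphi_{t,t-\tau,\tau}(x_0)=x_0=x_2$, which is the asserted gluing. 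The boundary values $\tau=0$ and $\tau=t$ are covered verbatim by the same computation. Hence $\Delta_S$ is a continuous interval sheaf for every set $S$.

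For the concluding assertion, the key observation is that a constant presheaf is determined by its value set alone, all its structure maps being identities. Consequently a morphism of continuous interval sheaves $\Delta_S\to\Delta_{S'}$ is precisely one map $S\to S'$: the component maps $\Delta_S(a)\to\Delta_{S'}(a)$ are forced to agree for all $a$ because every naturality square has identity vertical arrows. Therefore $\Delta_S\cong\Delta_{S'}$ as continuous interval sheaves if and only if $S$ and $S'$ are in bijection; in particular any two constant continuous interval sheaves with the same value set are isomorphic, and all those with a one-element value set — the sheaves that model the absence of an interface in the closed-system definition to follow — are canonically isomorphic.

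I do not expect a genuine obstacle: the whole content is that a constant presheaf on $\mathbf{Int}$ needs no sheafification, and both sheaf axioms reduce to injectivity and surjectivity of $\mathrm{id}_S$. The only things requiring a little care are tracking the direction of each $\varphi_{t_1,t_0,\tau}$ through the index conventions and reading the two displays in the definition of a continuous interval sheaf with their evidently intended right-hand sides (the first conjunct of separatedness should compare $x$ with $y$, and the gluing display is missing the clause ``$=x_2$''); with those fixed, the verification is the routine computation sketched above.
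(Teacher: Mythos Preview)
Your verification of the sheaf axioms is correct and is essentially the paper's argument spelled out: the paper merely observes that the covering $(0,t)$ witnesses separatedness and calls gluing ``obviously true'', whereas you unwind both conditions to the triviality that $\mathrm{id}_S$ is injective and surjective.

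For the ``in particular'' the two routes diverge. The paper argues via the general definition of a constant sheaf as something isomorphic to the sheafification of a constant presheaf, notes that over $\mathbf{Int}$ no sheafification is needed, and then asserts that ``constant presheaves are evidently isomorphic'', concluding that all constant continuous interval sheaves are isomorphic. You instead identify $\mathrm{Hom}(\Delta_S,\Delta_{S'})$ with $\mathrm{Hom}_{\mathbf{Set}}(S,S')$ via the naturality squares with identity verticals, obtain $\Delta_S\cong\Delta_{S'}$ exactly when $S\cong S'$, and restrict the conclusion to the one-element case actually used for $\mathfrak{O}$. Your formulation is the sharper one: the paper's blanket assertion is false as stated (for instance $\Delta_{\{0\}}\not\cong\Delta_{\{0,1\}}$ over $\mathbf{Int}$, since a sheaf isomorphism would in particular give a bijection on sections over $0$), so your caveat is a genuine repair rather than excess caution, and it isolates precisely the instance the paper needs.
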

\begin{proof}
Let $x$ be a set with identity morphism $1_x$. Put $\mathcal{X}\in\mathrm{Fct}(\mathrm{\textbf{Int}}^\text{op},\mathcal{C})$ as the constant functor that assigns $x$ to each interval object and $1_x$ to each interval morphism. To see that this is indeed a sheaf, observe that each $t\in\mathrm{Ob_\textbf{Int}} = \mathbb{R}_{\geq 0}$ admits at least one nonempty covering, namely $(0,t)$, and hence the first sheaf property is fulfilled; the second property is obviously true. Thus, $\mathcal{X}$ is a continuous interval sheaf. Per definitionem, a constant sheaf is a sheaf that is isomorphic to the sheafification of a constant presheaf. Since constant presheaves are sheaves, each constant sheaf is isomorphic to a constant presheaf; and constant presheaves are evidently isomorphic. Therefore, all constant sheaves are indeed isomorphic.  
\end{proof}

Now, we may describe a port-control structure for a closed system, which has certain desired properties, as follows.

\begin{Definition}\label{def:prt-control}
Let $\mathfrak{B}_0$ and $\mathfrak{B}$ be continuous interval sheaves and let $A:\mathfrak{B}_0\hookrightarrow\mathfrak{B}$ be a monomorphism of sheaves so. Choose continuous interval sheaves $\mathfrak{A},\mathfrak{E}'$ and a morphism $\varphi:\mathfrak{B}\to\mathfrak{A}\times\mathfrak{E}$. A continuous machine $(\mathfrak{B}',\mathfrak{A}',\mathfrak{E}',\varphi')$ is a port-controlled $\mathfrak{B}_0$-system with $(\mathfrak{B},\mathfrak{A},\mathfrak{E},\varphi)$-interface if there exist a constant continuous interval sheaf $\mathfrak{O}$, a continuous interval sheaf $\mathfrak{A}_0$ and monomorphisms $(A,\Phi_1,\Phi_2),\Psi,\Xi$ of continuous machines so that the following diagram is commutative:
\begin{center}
\begin{tikzcd}
(\mathfrak{B}_0,\mathfrak{O},\mathfrak{A}_0,\varphi)\arrow[hook]{rr}{(A,\Phi_1,\Phi_2)}\arrow[hook]{dr}{\Psi} & &  (\mathfrak{B},\mathfrak{A},\mathfrak{E},\varphi)\\
& (\mathfrak{B}',\mathfrak{A}',\mathfrak{E}',\varphi')\arrow[hook]{ur}{\Xi} &
\end{tikzcd}
\end{center}
\end{Definition}

\begin{Remark}
The choice of $\mathfrak{A}$ and $\mathfrak{E}$ and the projection $\varphi$ may be interpreted as the (arbitrary) choice of port-variables in the originally closed system $\mathfrak{B}$. The original system, to which we strive to assign a port-structure, is a closed subsystem of $\mathfrak{B}$. The choice of the subsystem $(\mathfrak{B}',\mathfrak{A}',\mathfrak{E}',\varphi')$ may be viewed as eliminating certain more or less undesired configurations of port-variables or unnecessary ``states'' of the enclosing model $\mathfrak{B}$. An example would be our motivating mass spring system, and we include in the model system a totally unrelated system, e.g. a blinking lamp, which is not needed nor desired for our particular choice of port-Hamiltonian system. Eliminating this system from the model system is a specific choice of subsystem $(\mathfrak{B}',\mathfrak{A}',\mathfrak{E}',\varphi')$.
\end{Remark}

\section{Applications}

\subsection{Port-Hamiltonian systems on Euclidean space}

Port-Hamiltonian systems on the Euclidean space are systems which have the form
\begin{equation}\label{eq:pH}
\begin{aligned}
\tfrac{\mathrm{d}}{\mathrm{d}t}x & = (J(x)-R(x))\nabla H(x) + B(x)u\\
y & = B(x)^\top \nabla H(x),
\end{aligned}
\end{equation}
where $J,R\in\mathcal{C}(\mathbb{R}^n,\mathbb{R}^{n\times n})$ with $J(\cdot) = -J(\cdot)^\top$ and $R(\cdot) = R(\cdot)^\top\geq 0$, and $H\in\mathcal{C}^1(\mathbb{R}^n,\mathbb{R})$. These systems may be viewed as a particular choice of port-structure for a dissipative Hamiltonian system modelled by
\begin{align*}
\tfrac{\mathrm{d}}{\mathrm{d}t}x = (J(x)-R(x))\nabla H(x).
\end{align*}
Put as $\mathfrak{B}_0$ the behaviour of Example~\ref{ex:ODE} with $f = (J-R)\nabla H$, i.e. $\mathfrak{B}_0(\tau)\subseteq\mathcal{C}^1([0,\tau],\mathbb{R}^n)\times\mathbb{R}$ is the set
\begin{align*}
\set{(x,\vartheta)\,\big\vert\,\forall t\in [0,\tau]: \tfrac{\mathrm{d}}{\mathrm{d}t}x(t) = (J(x(t))-R(x(t)))\nabla H(x(t))}.
\end{align*}
Note that, due to time-invariance of the system, the time-shift parameter $\vartheta$ may be omitted; this would, however, complicate the following construction. Define the continuous matrix fields
\begin{align*}
\mathcal{J}:\mathbb{R}^{n+m} & \to\mathbb{R}^{(n+m)\times(n+m)},\qquad (x,\zeta)\mapsto \begin{bmatrix}
J(x) & B(x)\\
-B(x)^\top & 0
\end{bmatrix},\\
\mathcal{R}:\mathbb{R}^{n+m} & \to\mathbb{R}^{(n+m)\times(n+m)},\qquad (x,\zeta)\mapsto \begin{bmatrix}
R(x) & 0\\
0 & 0
\end{bmatrix}
\end{align*}
and put as $\mathfrak{B}$ the behaviour of the collection of dissipative Hamiltonian systems with structure matrices $\mathcal{J}$ and $\mathcal{R}$ whose Hamiltonian function may be decomposed into the direct sum of $H$ and an auxiliary, possibly time-varying, Hamiltonian function $H_\alpha\in\mathcal{C}^{0,1}([0,\tau]\times\mathbb{R}^{m},\mathbb{R}))$, i.e. $H_\alpha$ is continuous in the first and continuously differentiable in the last $m$ entries. This yields $\mathfrak{B}(\tau)\subseteq \mathcal{C}^1([0,\tau],\mathbb{R}^n)\times\mathbb{R}\times\mathcal{C}^{0,1}([0,\tau]\times\mathbb{R}^m,\mathbb{R})$ as the set
\begin{align*}
\set{(\xi,\vartheta,H_\alpha)\left\vert\!\begin{array}{l}\begin{array}{cl}\tfrac{\mathrm{d}}{\mathrm{d}t}\xi(t) = & \mathcal{J}(\xi(t))\nabla (H\oplus H_\alpha(t-\vartheta,\cdot))(\xi(t))\\ & -\mathcal{R}(\xi(t))\nabla (H\oplus H_\alpha(t-\vartheta,\cdot))(\xi(t))\end{array}\\\text{for all}~t\in [0,\tau]\end{array}\right.\!\!}
\end{align*}
With the restriction morphisms
\begin{align*}
\varphi^{\mathfrak{B}}_{t_1,t_0,\tau}(\xi,\vartheta,H_\alpha) := (t\mapsto \xi(t+\tau),\vartheta-\tau,H_\alpha(\cdot+\tau,\cdot))
\end{align*}
for all $0\leq t_0\leq t_1$ and $0\leq\tau\leq t_1-t_0$, $\mathfrak{B}$ becomes a continuous interval sheaf which admits the embedding $A:\mathfrak{B}_0\hookrightarrow\mathfrak{B}$
\begin{align*}
A(x,\vartheta):=\left(\left(x,-\int_0^\cdot B(x(w))^\top\nabla H(x(w))\,\mathrm{d}w\right),\vartheta,0\right).
\end{align*}
A direct calculation yields for all $(x,\vartheta)\in\mathfrak{B}_0(\tau)$ and $\zeta := -\int_0^\cdot B(x(w))^\top\nabla H(x(w))\in\mathcal{C}^1([0,\tau],\mathbb{R}^m)$ and for all $t\in[0,\tau]$,
\begin{align*}
& \tfrac{\mathrm{d}}{\mathrm{d}t}\begin{pmatrix}x\\-\int_0^\cdot B(x(w))^\top\nabla H(x(w))\,\mathrm{d}w\end{pmatrix}(t)\\
&\quad = \begin{pmatrix}
(J(x(t))-R(x(t)))\nabla H(x(t))\\
-B(x(t))^\top\nabla H(x(t))
\end{pmatrix}\\
&\quad = \left(\mathcal{J}-\mathcal{R}\right)(x(t),\zeta(t))\nabla (H\oplus 0(t-\vartheta,\cdot))(x(t),\zeta(t))
\end{align*}
and hence $A$ is indeed well-defined; $A$ is a monomorphism since each $A_\tau:\mathfrak{B}_0(\tau)\to\mathfrak{B}(\tau)$ is by construction injective. The images of the morphisms of sheaves
\begin{align*}
& \varphi_\mathfrak{A}:\mathfrak{B}\to\mathcal{C}(\cdot,\mathbb{R}^m)\times\mathbb{R},\\
&\qquad (\xi,\vartheta,H_\alpha) \mapsto (\mathrm{d}p_2(\nabla(H\oplus H_\alpha))\circ\xi,\vartheta),\\
& \varphi_\mathfrak{E}:\mathfrak{B}\to\mathcal{C}(\cdot,\mathbb{R}^m)\times\mathbb{R},\ (\xi,\vartheta,H_\alpha)  \mapsto \left(-\tfrac{\mathrm{d}}{\mathrm{d}t}p_2\circ\xi,\vartheta\right),
\end{align*}
where $p_2:\mathbb{R}^{n+m}\to\mathbb{R}^m$ is the Cartesian projection onto the second component bear the structure of continuous interval sheaves $\mathfrak{A}$ and $\mathfrak{B}$. Then, we have a continuous machine $(\mathfrak{B},\mathfrak{A},\mathfrak{E},(\varphi_\mathfrak{A},\varphi_\mathfrak{B}))$. Define $\varphi_{\mathfrak{A}_0}:\mathfrak{B}_0\to \mathcal{C}(\cdot,\mathbb{R}^m)\times\mathbb{R}$ as the pullback of $\varphi_\mathfrak{A}$ under $A$, i.e. $\varphi_{\mathfrak{A}_0} := \varphi_{\mathfrak{A}}\circ A$, and let $\mathfrak{A}_0$ the image of $\varphi_{\mathfrak{A}_0}$, which is a sheaf. Let further $\mathfrak{O}$ be the continuous interval sheaf that assigns each interval the set $\set{0}$. By construction of $A$, we see that the projection $\varphi_{\mathfrak{O}}:\mathfrak{B}_0\to\mathfrak{O}$ is exactly $\varphi_{\mathfrak{E}}\circ A$ and hence $(A,\mathrm{id}_{\mathfrak{A}_0},\mathrm{id}_{\mathfrak{E}_0})$, where $\mathrm{id}_{\mathfrak{A}_0}:\mathfrak{A}_0\hookrightarrow\mathfrak{A}$ and $\mathrm{id}_{\mathfrak{E}_0}:\mathfrak{E}_0\hookrightarrow\mathfrak{E}$ denote the canonical embeddings, is a monomorphism of continuous machines. Recall from Example~\ref{ex:ISO} that a representation of the input-state-output system~\eqref{eq:pH} as a continuous machine is $(\widehat{\mathfrak{B}},\widehat{\mathfrak{A}},\widehat{\mathfrak{E}},(\varphi_{\widehat{\mathfrak{A}}},\varphi_{\widehat{\mathfrak{E}}}))$ with $\widehat{\mathfrak{B}}(\tau)\subseteq \mathcal{C}^1([0,\tau],\mathbb{R}^n)\times\mathcal{C}([0,\tau],\mathbb{R}^m)\times\mathbb{R}$ given as the set
\begin{align*}
\set{(x,u,\vartheta)\,\left\vert\,\forall t\in[0,\tau]:\begin{array}{cl}\tfrac{\mathrm{d}}{\mathrm{d}t}x(t) =  & J(x(t))\nabla H(x(t))\\ & -R(x(t))\nabla H(x(t))\\ & +B(x(t))u(t)\end{array}\right.}
\end{align*}
and $\widehat{\mathfrak{A}}$ and $\widehat{\mathfrak{E}}$ given as the images of the projections
\begin{align*}
\varphi_{\widehat{\mathfrak{A}}}:\widehat{\mathfrak{B}} & \to\mathcal{C}(\cdot,\mathbb{R}^m)\times\mathbb{R},\quad (x,u,\vartheta)\mapsto (u,\vartheta),\\
\varphi_{\widehat{\mathfrak{E}}}:\widehat{\mathfrak{B}} & \to\mathcal{C}(\cdot,\mathbb{R}^m)\times\mathbb{R},\quad (x,u,\vartheta)\mapsto (B(x(t))^\top\nabla H(x(t)),\vartheta).
\end{align*}
We show that $(\widehat{\mathfrak{B}},\widehat{\mathfrak{A}},\widehat{\mathfrak{E}},(\varphi_{\widehat{\mathfrak{A}}},\varphi_{\widehat{\mathfrak{E}}}))$ fits into the diagram
\begin{center}
\begin{tikzcd}[ampersand replacement=\&]
(\mathfrak{B}_0,\mathfrak{A}_0,\mathfrak{O},\varphi)\arrow[hook]{rr}{(A,\mathrm{id}_{\mathfrak{A}_0},\mathrm{id}_{\mathfrak{E}_0})}\arrow[hook]{dr}{\Psi} \& \&  (\mathfrak{B},\mathfrak{A},\mathfrak{E},\xi)\\
\& (\widehat{\mathfrak{B}},\widehat{\mathfrak{A}},\widehat{\mathfrak{E}},\psi)\arrow[hook]{ur}{\Xi} \&
\end{tikzcd}
\end{center}
where $\varphi := \varphi_{\mathfrak{A}_0}\oplus\varphi_\mathfrak{O}$, $\psi := \varphi_{\widehat{\mathfrak{A}}}\oplus\varphi_{\widehat{\mathfrak{E}}}$ and $\xi := \varphi_\mathfrak{A}\oplus\varphi_\mathfrak{E}$
To this end, define the embeddings $\Psi = (A,\mathrm{id}_{\mathfrak{A}_0},\mathrm{id}_{\mathfrak{E}_0})$ and $\Xi = (\Xi_0,\Xi_1,\Xi_2)$ as
\begin{align*}
{\small \Xi_0(x,u,\vartheta) := \left(\left(x,-\int_0^\cdot B(x(w))^\top\nabla H(x(w))\,\mathrm{d}w\right)\vartheta, u(\cdot)^\top\right)}
\end{align*}
and $\Xi_1$ and $\Xi_2$ the canonical embedding of $\widehat{\mathfrak{A}}\subseteq\mathfrak{A}$ and $\widehat{\mathfrak{A}}\subseteq\mathfrak{A}$, respectively. This shows that $(\widehat{\mathfrak{B}},\widehat{\mathfrak{A}},\widehat{\mathfrak{E}},(\varphi_{\widehat{\mathfrak{A}}},\varphi_{\widehat{\mathfrak{E}}}))$ is a port-controlled dissipative Hamiltonian system with time-varying Hamiltonian interface.

\subsection{Metriplectic systems}

Metriplectic systems are an extension of Hamiltonian systems to incorporate dissipative effects due to irreversible generation of entropy. They are used as models for systems in fluid dynamics and thermodynamics, see e.g.~\cite{Morr84}. In their most simple, finite-dimensional incarnation, metriplectic systems are given as follows.

\begin{Definition}
The dynamical system $M(J,G,H,S)$ given by the differential equation
\begin{align}\label{eq:metriplectic}
\tfrac{\mathrm{d}}{\mathrm{d}t} x = J(x)\nabla H(x) + G(x)\nabla S(x),
\end{align}
where $J,G\in\mathcal{C}(\mathbb{R}^n,\mathbb{R}^{n\times n})$ fulfil $J(x) = -J(x)^\top$ and $G(x) = G(x)^\top\geq 0$ for all $x\in\mathbb{R}^n$, and $H,S\in\mathcal{C}^1(\mathbb{R}^n,\mathbb{R})$ are subject to the noninteraction condition
\begin{align}\label{eq:noninteraction}
\forall x\in\mathbb{R}^n: J(x)\nabla S(x) = G(x)\nabla H(x) = 0.
\end{align}
\end{Definition}

Analogously to Hamiltonian systems, we propose a straightforward definition of port-controlled metriplectic systems using the behavioural framework. A representation of the behaviour of the metriplectic system~\eqref{eq:metriplectic} $\mathfrak{B}_0$ so that $\mathfrak{B}_0(\tau)\subseteq\mathcal{C}^1([0,\tau],\mathbb{R}^n)\times\mathbb{R}$ is
\begin{align*}
\set{(x,\vartheta)\,\big\vert\,\forall t: \tfrac{\mathrm{d}}{\mathrm{d}t}x(t) = J(x)\nabla H(x(t)) + G(x(t))\nabla S(x(t))}.
\end{align*}
Let $\widetilde{J},\widetilde{G}\in\mathcal{C}(\mathbb{R}^n,\mathbb{R}^{m\times m})$ and $B,A\in\mathcal{C}(\mathbb{R}^n,\mathbb{R}^{n\times m}$, and define 
\begin{align*}
\mathcal{J}:\mathbb{R}^{n+m} & \to\mathbb{R}^{(n+m)\times(n+m)},\qquad (x,\zeta)\mapsto \begin{bmatrix}
J(x) & B(x)\\
-B(x)^\top & \widetilde{J}(x)
\end{bmatrix},\\
\mathcal{G}:\mathbb{R}^{n+m} & \to\mathbb{R}^{(n+m)\times(n+m)},\qquad (x,\zeta)\mapsto \begin{bmatrix}
G(x) & A(x)\\
A(x)^\top & \widetilde{G}(x)
\end{bmatrix}.
\end{align*}
Additionally, it is assumed that $\widetilde{J},\widetilde{G},A$ and $B$ are chosen so that $\mathcal{J}$ is pointwise antisymmetric and $\mathcal{G}$ is pointwise symmetric and positive semidefinite. We define $\mathfrak{B}$ as the behaviour of all metriplectic systems with structure matrices $\mathcal{J}$ and $\mathcal{R}$ whose Hamiltonian and entropy functions admit a decomposition into the direct sum of $H$ and a time-varying auxiliary Hamiltonian function $H_\alpha\in\mathcal{C}^{0,1}([0,\tau]\times\mathbb{R}^{m},\mathbb{R}))$, and $S$ and $S_\alpha\in\mathcal{C}^{0,1}([0,\tau]\times\mathbb{R}^{m},\mathbb{R}))$, respectively. We put, for each $\tau\in\mathbb{R}_{\geq 0}$, $\mathfrak{B}(\tau)\subseteq \mathcal{C}^1([0,\tau],\mathbb{R}^n)\times\mathbb{R}\times\mathcal{C}^{0,1}([0,\tau]\times\mathbb{R}^m,\mathbb{R})\times\mathcal{C}^{0,1}([0,\tau]\times\mathbb{R}^m,\mathbb{R})$ as the set of all $(\xi,\vartheta,H_\alpha,S_\alpha)$ with
\begin{align*}
\forall t\in [0,\tau]: \begin{array}{cl}\tfrac{\mathrm{d}}{\mathrm{d}t}\xi(t) = & \mathcal{J}(\xi(t))\nabla (H\oplus H_\alpha(t-\vartheta,\cdot))(\xi(t))\\
& +\mathcal{G}(\xi(t))\nabla (S+S_\alpha(t-\vartheta,\cdot))(\xi(t)),
\end{array}
\end{align*}
and $\widetilde{J}(\cdot)\nabla(H\oplus H_\alpha)(\cdot) = \widetilde{G}(\cdot)\nabla(S\oplus S_\alpha)(\cdot)\equiv 0$,
and define the canonical restriction morphisms so that $\mathfrak{B}$ is a continuous interval sheaf. In particular, $\mathfrak{B}$ admits the embedding $A:\mathfrak{B}_0\hookrightarrow\mathfrak{B}$
\begin{align*}
A(x,\vartheta) & :=\Big(\Big(x,-\int_0^\cdot B(x(w))^\top\nabla H(x(w))\\
& \qquad+A(x(w))^\top\nabla S(x(w))\,\mathrm{d}w\Bigg),\vartheta,0,0\Big);
\end{align*}
a direct calculation yields that $A$ is indeed a well-defined monomorphism. Define the projections\vspace{-5mm}
\begin{center}
\scalebox{0.9}{\parbox{\linewidth}{\begin{align*}
& \varphi_\mathfrak{A}(\xi,\vartheta,H_\alpha,S_\alpha) :=  (\mathrm{d}p_2(\nabla(H\oplus H_\alpha))\circ\xi,\mathrm{d}p_2(\nabla(S\oplus S_\alpha))\circ\xi.\vartheta),\\
& \varphi_\mathfrak{E}(\xi,\vartheta,H_\alpha,S_\alpha):=\left(-\tfrac{\mathrm{d}}{\mathrm{d}t}p_2\circ\xi,\vartheta\right),
\end{align*}}}
\end{center}\vspace{-1mm}
and equip their images $\mathfrak{A}$ and $\mathfrak{B}$ with the canonical structure of a continuous interval sheaf so that $\varphi_\mathfrak{A}$ and $\varphi_\mathfrak{B}$ are sheaf-morphisms, which gives a continuous machine $(\mathfrak{B},\mathfrak{A},\mathfrak{E},(\varphi_\mathfrak{A},\varphi_\mathfrak{E}))$. Choosing the particular auxiliary Hamiltonians that are induced by continuous curves in the dual space, we get the behaviour $\widetilde{\mathfrak{B}}$, with, for each $\tau\in\mathbb{R}_{\geq 0}$, by $(x,u,\tau,\vartheta)\in \widetilde{\mathfrak{B}}(\tau)$ if, and only if,
\begin{align*}
\forall t\in[0,\tau]:\begin{array}{cl}\tfrac{\mathrm{d}}{\mathrm{d}t}x(t) = & J(x(t))\nabla H(x(t))+G(x(t))\nabla S(x(t))\\
& +B(x(t))u(t)+A(x(t))\tau(t),
\end{array}
\end{align*}
and $B(\cdot)\tau = G(\cdot)u\equiv 0$, $B(\cdot)^\top\nabla S(\cdot) - \widetilde{J}(\cdot)\tau\equiv 0$, and $A(\cdot)^\top\nabla H(\cdot)+\widetilde{G}(\cdot)u\equiv 0$
This behaviour admits an analogous embedding $\Xi_0$, which allows us to pull back the projections $\varphi_\mathfrak{A}$ and $\varphi_\mathfrak{B}$ giving a morphism of continuous machines $\Xi:(\widetilde{\mathfrak{B}},\widetilde{\mathfrak{A}},\widetilde{\mathfrak{E}},\widetilde{\varphi})\hookrightarrow (\mathfrak{B},\mathfrak{A},\mathfrak{E},(\varphi_\mathfrak{A},\varphi_\mathfrak{E}))$. Then, we can complete the diagram
\begin{center}
\begin{tikzcd}
(\mathfrak{B}_0,\mathfrak{A}_0,\mathfrak{O},\varphi)\arrow[hook]{rr}{(A,\mathrm{id}_{\mathfrak{A}_0},\mathrm{id}_{\mathfrak{E}_0})}\arrow[hook]{dr}{\Psi} & &  (\mathfrak{B},\mathfrak{A},\mathfrak{E},\xi)\\
& (\widetilde{\mathfrak{B}},\widetilde{\mathfrak{A}},\widetilde{\mathfrak{E}},\widetilde{\varphi})\arrow[hook]{ur}{\Xi} &
\end{tikzcd}
\end{center}
analogously to the Hamiltonian case. This gives us that the continuous machine $(\widetilde{\mathfrak{B}},\widetilde{\mathfrak{A}},\widetilde{\mathfrak{E}},\widetilde{\varphi})$ is a port-controlled metriplectic system with time-varying metriplectic interface. When we choose the matrix fields $A$ and $B$ so that additionally the strong noninteraction condition $B(\cdot)^\top\nabla S(\cdot) = A(\cdot)^\top\nabla H(\cdot) \equiv 0$ is fulfilled, then we get the following class of input-state-output systems.

\begin{Definition}
A port-metriplectic system is a control system
\begin{equation}\label{eq:port-GENERIC}
\begin{aligned}
\tfrac{\mathrm{d}}{\mathrm{d}t} x & = J(x)\nabla  H(x) + G(x)\nabla S(x)+B(x)u+A(x)\tau\\
y & = B(x)^\top\nabla H(x) - A(x)^\top\nabla S(x)-\widetilde{J}(x)u-\widetilde{G}(x)\tau
\end{aligned}
\end{equation}
where $J,R\in\mathcal{C}(\mathbb{R}^n,\mathbb{R}^{n\times n})$ with $J(\cdot)^\top = -J(\cdot)$ and $G(\cdot)^\top = G(\cdot)$, $A,B\in\mathcal{C}(\mathbb{R}^n,\mathbb{R}^{n\times m})$, $\widetilde{J},\widetilde{G}\in\mathcal{C}(\mathbb{R}^n,\mathbb{R}^{m\times m})$ with $\widetilde{J}(\cdot) = -\widetilde{J}(\cdot)^\top$ and
\begin{align*}
\begin{bmatrix}
G(\cdot) & A(\cdot)\\
A(\cdot)^\top & \widetilde{G}(\cdot)
\end{bmatrix}\geq 0,
\end{align*}
and $S,H\in\mathcal{C}^1(\mathbb{R}^n,\mathbb{R})$ and $u,\tau\in\mathcal{C}(\mathbb{R},\mathbb{R}^m)$ so that
\begin{align*}
J\partial S = G\partial H = B\tau = Au \equiv 0
\end{align*}
and
\begin{align*}
B(\cdot)^\top\nabla S(\cdot) = A(\cdot)^\top\nabla H(\cdot) = \widetilde{J}(\cdot)\tau = \widetilde{G}(\cdot)u\equiv 0.
\end{align*}
\end{Definition}

\section{Conclusion}

A general approach of assigning port-control structures to a general behavioural dynamical system was presented. The versatility of this approach has been demonstrated with examples.

\bibliographystyle{alpha}
\bibliography{Literatur} 
\end{document}